\newcommand{\av}{\operatorname{Av}}
\newtheorem{theorem}{Theorem}[section]
\newtheorem{proposition}[theorem]{Proposition}
\newtheorem{lemma}[theorem]{Lemma}
\newtheorem{corollary}[theorem]{Corollary}
\newtheorem{question}[theorem]{Question}
\newtheorem{conjecture}[theorem]{Conjecture}
\author[Mikl\'os B\'onal]{Mikl\'os B\'ona\affiliationmark{1}\thanks{Supported by Simons Collaboration Award 940024.}
  }
\title[Long Increasing Subsequences]{Long Increasing Subsequences and Non-algebraicity}
\affiliation{
  University of Florida, Gainesville, Florida, USA\\}
\keywords{permutations, patterns, generating functions, non-algebraicity}
\begin{document}
\publicationdata{vol. 26:1, Permutation Patterns 2023}{2024}{3}{10.46298/dmtcs.12539}{2023-11-11; 2023-11-11; 2024-02-19; 2024-06-18}{2024-06-18}
\maketitle
\begin{abstract}
 We use a recent result of Alin Bostan to prove that the generating functions of two infinite sequences of permutation classes are not algebraic. 
\end{abstract}

\section{Introduction}

 We say that a permutation $p$ {\em contains} the pattern (or subsequence) $q=q_1q_2\cdots q_k$ 
if there is a $k$-element set of indices $i_1<i_2< \cdots <i_k$ such that $p_{i_r} < p_{i_s} $ if and only
if $q_r<q_s$.  If $p$ does not contain $q$, then we say that $p$ {\em avoids} $q$. For example, $p=3752416$ contains
$q=2413$, as the first, second, fourth, and seventh entries of $p$ form the subsequence 3726, which is order-isomorphic
to $q=2413$.  A recent survey on permutation 
patterns by Vatter can be found in \cite{v15}. Let  $\av_n(q)$ be 
the set of permutations of {\em length} $n$ that avoid the pattern $q$, where the {\em length} of a permutation is the number of entries in it.
 If $S$ is a {\em set} of patterns, and the permutation $p$ avoids all patterns in $S$, then we will say that $p$ \emph{avoids} $S$, and 
we will write $|\av_n(S)|$ for the number of such permutations of length $n$, $\av(S)$ for the set of such permutations of all lengths (such a set is called a \emph{permutation class}) and
 $\av_n(S)$ for those such permutations of length $n$. 

 In general, it is very difficult to compute
the numbers  $|\av_n(S)|$, or to describe their sequence as $n$ goes to infinity. Recently, there has been some progress in proving {\em negative results} about the ordinary generating
function $A_S(z)$ of the sequence $|\av_n(S)|$.  In \cite{b20}, the present author proved that for most patterns $q$, the generating function 
$A_q(z)=\sum_{n\geq 0} |\av_n(q)|z^n$ is not rational. Non-algebraicity of these generating functions is also very hard to prove, because there are very few general tools to prove that a combinatorial power 
series is not algebraic. As we explain in the next section, most results on non-algebraicity of generating functions $A_S(z)=\sum_{n\geq 0} |\av_n(S)|z^n$
were based on exact asymptotics of the coefficients, and those exact asymptotics are very hard to establish. (There has been one example \cite{gp} when non-algebraicity of 
the generating function of a permutation class was shown as a consequence of the stronger result that the generating function was not differentiably finite.)
In this paper, we will use a recent result of Alin Bostan to prove the non-algebraicity of $A_S(z)$ for two infinite sequences of sets $S$ from a weaker condition on the growth rate
of the numbers $|\av_n(S)|$.

\section{Tools to Prove Non-algebraicity} 
A power series $A(z)$ is called {\em algebraic} if there are polynomials $P_0(z), P_1(z),\cdots, P_d(z)$ that are not all identically zero so that the equality 
\[P_0(z)+P_1(z)A(z)+P_2(z)A^2(z)+\cdots +P_d(z)A^d(z)=0\] holds. See Section 6 of \cite{s23} for a high-level introduction to the theory of algebraic power series. 
Until recently, the only general, direct method to prove non-algebraicity of a generating function $A_q(S)$ was the following theorem of  \cite{j31}. 

\begin{theorem} \label{jungen}  Let $m$ be a positive integer, let $c$ and $\gamma$ be positive constants,    and let $A(z)=\sum_{n\geq 0}a_nz^n$ be a power series with complex coefficients. 
If \[a_n \simeq c \frac{ \gamma^n}{n^m} ,\]
then $A(z)$ is not an algebraic power series. \end{theorem}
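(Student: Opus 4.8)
The plan is to assume that $A(z)$ is algebraic and derive a contradiction from the known shape of the coefficient asymptotics of an algebraic power series, the point being that such asymptotics never feature a pure negative‑integer power of $n$.

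\textbf{Normalization.} First I would replace $A(z)$ by $A(z/\gamma)=\sum_{n}(a_n\gamma^{-n})z^n$. This changes neither algebraicity — composing a relation $\sum_{j}P_j(z)A(z)^j=0$ with $z\mapsto z/\gamma$ gives $\sum_j P_j(z/\gamma)\,A(z/\gamma)^j=0$, and the $P_j(z/\gamma)$ are still polynomials in $z$, not all identically zero — nor the form of the hypothesis, since $a_n\gamma^{-n}\simeq c/n^m$. So I may assume $\gamma=1$, i.e.\ $a_n\simeq c/n^m$. Then $\limsup_n|a_n|^{1/n}=1$, so $A$ has radius of convergence $1$; by Pringsheim's theorem $z=1$ is a singularity of $A$, and $A$ is not a polynomial (its coefficients are eventually nonzero).

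\textbf{Structure of an algebraic function near the unit circle.} Suppose $\sum_{j=0}^{d}P_j(z)A(z)^j=0$ with the $P_j$ polynomials, not all zero. Then $A$ is analytic in $|z|<1$ and has only finitely many singularities on $|z|=1$, all lying among the zeros of $P_d$ and of the $y$-discriminant of $\sum_j P_j(z)y^j$; at each such $\zeta$ the Newton–Puiseux theorem gives a local expansion $A(z)=\sum_{k\ge k_0}d_k(1-z/\zeta)^{k/q}$, convergent in a slit neighbourhood of $\zeta$, for some positive integer $q$ and some $k_0\in\mathbb Z$, and crucially \emph{with no logarithmic terms}. Such an expansion actually exhibits a singularity at $\zeta$ only if some exponent $k/q$ occurring with $d_k\ne 0$ is not a non-negative integer; let $\theta_\zeta$ denote the smallest such exponent, which exists at $\zeta=1$ by Pringsheim.

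\textbf{Transfer and the contradiction.} Since every boundary singularity is of this algebraic branch-point type, $A$ continues analytically to a $\Delta$-domain at each of them, and the Flajolet–Odlyzko transfer theorem — which is in substance the content of \cite{j31} — applies termwise, yielding
\[
a_n \;=\; \sum_{\substack{|\zeta|=1\\ \zeta\ \mathrm{singular}}} \zeta^{-n}\bigl(e_\zeta\, n^{-\theta_\zeta-1}+o(n^{-\theta_\zeta-1})\bigr),
\]
where each $e_\zeta$ is a nonzero multiple of $1/\Gamma(-\theta_\zeta)$, finite and nonzero precisely because $\theta_\zeta$ is not a non-negative integer. Put $\theta^\star=\min_\zeta\theta_\zeta$, so that $a_n=n^{-\theta^\star-1}\bigl(\sum_{\theta_\zeta=\theta^\star}e_\zeta\zeta^{-n}+o(1)\bigr)$. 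Using the linear independence of the characters $n\mapsto\zeta^{-n}$ over distinct $\zeta$ on the unit circle (a nonzero trigonometric polynomial in $n$ cannot tend to a nonzero constant, nor be $o(1)$), the requirement that $a_n$ be asymptotic to the positive, non-oscillating sequence $c/n^m$ forces $\theta^\star=\theta_1$, only $\zeta=1$ present at this dominant level, and $-\theta_1-1=-m$. Hence $\theta_1=m-1$ is a non-negative integer, contradicting the definition of $\theta_1$. Therefore $A$ is not algebraic.

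\textbf{Main obstacle.} The analytic heart is entirely the middle step: that an algebraic function has finitely many singularities, each an algebraic branch point with a Puiseux expansion \emph{free of logarithms}, and that singularity analysis is legitimate there (continuation to a $\Delta$-domain). This is exactly what \cite{j31} supplies, so in a written proof I would simply quote it; the consequence one extracts is that the power of $n$ governing the dominant term of the coefficients of an algebraic power series is never a negative integer, which is incompatible with $a_n\simeq c/n^m$. The one remaining care is the bookkeeping when several singularities share modulus $1$ — ruling out cancellation and oscillation among the dominant contributions — which is handled by the elementary linear‑independence remark above.
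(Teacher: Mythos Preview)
The paper does not prove this statement: Theorem~\ref{jungen} is simply quoted as a classical result of Jungen \cite{j31} and used as a black box, so there is no in-paper proof to compare your proposal against.

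For what it is worth, your outline is the standard proof of Jungen's theorem and is essentially correct: an algebraic function has only finitely many singularities on its circle of convergence, each admits a Puiseux expansion with no logarithmic terms, and singularity analysis then forces the polynomial factor in the dominant asymptotics of $a_n$ to be $n^{-s-1}$ with $s$ rational but not a nonnegative integer, which is incompatible with $a_n\simeq c\,\gamma^n/n^m$ for a positive integer $m$. One small point to tidy: your invocation of Pringsheim's theorem requires nonnegative real coefficients, while the hypothesis allows complex $a_n$; since $a_n\simeq c/n^m$ with $c>0$ makes the coefficients eventually real and positive, subtracting the finite initial polynomial fixes this without changing algebraicity or the singularity set. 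The handling of possible competing singularities on the circle via the mean-square orthogonality of the characters $n\mapsto\zeta^{-n}$ is the right idea and suffices.
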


The following theorem of Amitaj Regev makes Theorem \ref{jungen} immediately applicable for our purposes. 
\begin{theorem}[\cite{r81}] \label{regev}
For all $k\geq 2$, there exists a constant $r_k$ so that the asymptotic equality
\[\mid \av_n(12\cdots k) \mid \simeq r_k \frac{(k-1)^{2n}}{n^{(k^2-2k)/2} }\]
holds.
\end{theorem}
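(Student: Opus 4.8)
The plan is to translate the enumeration into a sum over Young diagrams via the Robinson--Schensted correspondence, and then to extract the asymptotics of that sum by approximating it with a Gaussian integral of Mehta--Selberg type. I would begin from Schensted's theorem: under Robinson--Schensted a permutation of length $n$ corresponds to an ordered pair $(P,Q)$ of standard Young tableaux of a common shape $\lambda\vdash n$, and the length of the longest increasing subsequence of the permutation equals $\lambda_1$, the first part of $\lambda$. A permutation avoids $12\cdots k$ exactly when its longest increasing subsequence has length at most $k-1$, and since transposing tableaux gives $f^\lambda=f^{\lambda'}$, this yields
\[ |\av_n(12\cdots k)| \;=\; \sum_{\substack{\lambda\vdash n\\ \ell(\lambda)\le k-1}} (f^\lambda)^2 , \]
where $f^\lambda$ is the number of standard Young tableaux of shape $\lambda$ and $\ell(\lambda)$ is the number of nonzero parts. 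Writing $m=k-1$, the goal becomes $|\av_n(12\cdots k)| \simeq r_k\, m^{2n} / n^{(m^2-1)/2}$, since $(k^2-2k)/2=(m^2-1)/2$.

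Next I would make the summand explicit and run a saddle-point analysis. With $\lambda=(\lambda_1\ge\cdots\ge\lambda_m\ge 0)$ and $\ell_i=\lambda_i+m-i$, the determinantal form of the hook length formula gives
\[ f^\lambda = n!\,\frac{\prod_{1\le i<j\le m}(\ell_i-\ell_j)}{\prod_{i=1}^m \ell_i!}, \qquad (f^\lambda)^2 = (n!)^2\,\frac{\prod_{i<j}(\ell_i-\ell_j)^2}{\prod_i(\ell_i!)^2}. \]
The dominant diagrams are those with all parts within $O(\sqrt n)$ of $n/m$, so I substitute $\ell_i = n/m+\sqrt n\,x_i$. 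Applying Stirling's formula to $(n!)^2/\prod_i(\ell_i!)^2$ produces $m^{2n}$ times a Gaussian factor $\exp(-c\sum_i x_i^2)$ (the terms linear in the $x_i$ cancel because $\sum_i\ell_i$ is fixed), while $\prod_{i<j}(\ell_i-\ell_j)^2 = n^{\binom{m}{2}}\prod_{i<j}(x_i-x_j)^2$; converting the sum over the admissible integer vectors $(\ell_1,\dots,\ell_m)$ into an integral contributes another power of $n$. The limiting object is a Mehta integral $\int \prod_{i<j}(x_i-x_j)^2\,e^{-c\sum_i x_i^2}\,dx$ over a neighborhood of the hyperplane $\sum_i x_i=0$, a special case of the Selberg integral, which evaluates in closed form to a product of Gamma values and supplies the constant $r_k$. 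The power of $n$ comes from four sources: the ratio $n!/s!$ with $s=n+\binom{m}{2}$, the polynomial normalization of the multinomial coefficient $\binom{s}{\ell_1,\dots,\ell_m}$, the Vandermonde factor $n^{\binom{m}{2}}$, and a factor $n^{(m-1)/2}$ from converting an $(m-1)$-dimensional lattice sum to an integral; these combine to $-2\binom{m}{2}-(m-1)+\binom{m}{2}+(m-1)/2 = -(m^2-1)/2$, as required.

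The step I expect to be the real obstacle is making the passage from the discrete sum to the integral rigorous. One must show that diagrams with some $|\lambda_i-n/m|$ exceeding, say, $\sqrt n\log n$ contribute a negligible amount; handle the regions where two of the $\ell_i$ nearly coincide or where some $\lambda_i$ is close to $0$ (there the Vandermonde weight is small, but the combinatorics of very short rows needs separate treatment); and apply Stirling's formula with error terms that are uniform across the entire admissible range of $\lambda$, not just near the saddle. The algebraic inputs---Schensted's theorem, the hook length formula, and the evaluation of the Mehta/Selberg integral---are classical; the substance of the argument lies precisely in these uniform estimates, which is what Regev's original analysis supplies.
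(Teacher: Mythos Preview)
The paper does not give its own proof of this statement: Theorem~\ref{regev} is quoted from \cite{r81} and used as a black box, so there is no argument in the paper to compare against. Your outline is a faithful sketch of Regev's original method---RSK to reduce to $\sum_{\ell(\lambda)\le k-1}(f^\lambda)^2$, the hook length formula in determinantal form, a Gaussian saddle-point analysis centered at the rectangular shape, and the Selberg/Mehta integral to evaluate the limiting constant---and your identification of the exponent $(m^2-1)/2=(k^2-2k)/2$ is correct. You are also right that the genuine work lies in the uniform tail estimates justifying the passage from the lattice sum to the integral; that is precisely what Regev's paper carries out in detail.
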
  

\begin{corollary} \label{evenmonotone} Let $k>2$ be an even integer, and let $q=12\cdots k$. Then $A_q(z)$ is not algebraic. \end{corollary}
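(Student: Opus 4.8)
The plan is to combine Theorem~\ref{regev} and Theorem~\ref{jungen}, which together make the result almost immediate. First I would apply Theorem~\ref{regev} to the pattern $q = 12\cdots k$ to get
\[
|\av_n(q)| \simeq r_k \, \frac{(k-1)^{2n}}{n^{(k^2-2k)/2}} = r_k \, \frac{\bigl((k-1)^2\bigr)^n}{n^{k(k-2)/2}} ,
\]
and then I would read off from this the constants $c = r_k$, $\gamma = (k-1)^2$, and $m = k(k-2)/2$ needed to invoke Theorem~\ref{jungen}.

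The substance of the proof is just verifying that these constants meet the hypotheses of Theorem~\ref{jungen}: that $c$ and $\gamma$ are positive and that $m$ is a positive integer. Here $\gamma = (k-1)^2 > 0$ and $m = k(k-2)/2 > 0$ because $k > 2$, while $c = r_k > 0$ because $|\av_n(q)|$ is a sequence of positive integers and hence the constant in any asymptotic equivalent of it must be positive. The one place the hypothesis that $k$ is even is used is to guarantee that $m$ is an \emph{integer}: writing $k = 2j$ with $j \ge 2$ gives $m = 2j(2j-2)/2 = 2j(j-1)$, a positive integer. With all hypotheses of Theorem~\ref{jungen} verified, we conclude that $A_q(z)$ is not algebraic.

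I do not expect any real obstacle: the corollary is a direct consequence of the two quoted theorems. The only point worth flagging is precisely the integrality of the exponent $m = k(k-2)/2$. When $k$ is odd this quantity is a half-integer, so Theorem~\ref{jungen} does not apply as stated, and the odd case would require a genuinely different argument — presumably handled separately elsewhere in the paper.
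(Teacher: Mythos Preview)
Your proof is correct and follows exactly the same route as the paper: apply Theorem~\ref{regev}, observe that for even $k>2$ the exponent $(k^2-2k)/2$ is a positive integer, and conclude via Theorem~\ref{jungen}. The paper's proof is just a terser one-line version of what you wrote.
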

\begin{proof} If $k>2$ is even, then $(k^2-2k)/2$ is a positive integer, and so Theorem \ref{jungen} implies that $A_q(z)$ is not algebraic.
\end{proof}

It is usually difficult to find exact asymptotics for the numbers $|\av_n(S)|$, and therefore direct applications of Theorem \ref{jungen} to prove non-algebraicity of $A_q(S)$ for other 
pattern classes are rare.

The following result is due to Alin Bostan \cite{bos21}. See Lemma 6.3 in \cite{bb22} for its proof.  Compared to Theorem \ref{jungen},  it relaxes the asymptotics criterion on the coefficients of $A(z)$ somewhat, while yielding the
same conclusion.

\begin{lemma} \label{bostan}
 Let $A(z)=\sum_{n\geq 0}a_nz^n$ be a power series with nonnegative real coefficients that is analytic at the origin. Let us assume that constants $c$, $C$, $K$ and $m$ exist so that $m>1$ is an integer, and for all positive integers $n$, the chain of inequalities
\begin{equation} \label{condition}  
c \frac{K^n}{n^m}  \leq a_n  \leq C \frac{K^n}{n^m}
\end{equation} 
holds. Then $A(z)$ is not an algebraic power series.
\end{lemma}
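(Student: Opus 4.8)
The plan is to argue by contradiction, using the classical description of the Taylor coefficients of an algebraic power series. Suppose $A(z)=\sum_{n\ge 0}a_nz^n$ satisfies (\ref{condition}) but is algebraic. From (\ref{condition}) and the Cauchy--Hadamard formula, $\limsup_n a_n^{1/n}=K$, so $A$ is analytic on the disc $|z|<1/K$, and $A$ is not a polynomial since $a_n\ge cK^n/n^m>0$ for every $n$.

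Next I would invoke the structure theorem for the coefficient asymptotics of algebraic functions: the Newton--Puiseux local expansions at the (finitely many) singularities, fed into singularity analysis (the Flajolet--Odlyzko transfer theorem); this is the mechanism underlying Theorem \ref{jungen}. It provides finitely many pairs $(\omega_j,r_j)$, with each $\omega_j$ a nonzero algebraic number and each $r_j\in\mathbb{Q}\setminus\{-1,-2,-3,\dots\}$, together with nonzero constants $c_j$, such that
\[
a_n=\sum_j \omega_j^{\,n}\,n^{r_j}\bigl(c_j+O(1/n)\bigr).
\]
The two features I rely on are that there are \emph{no} logarithmic factors (algebraic functions have genuine Puiseux expansions at their singularities) and that \emph{no} $r_j$ is a negative integer (a Puiseux exponent equal to a nonpositive integer yields a factor $1/\Gamma$ of a nonpositive integer, hence contributes nothing to the coefficients).

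Now extract the dominant block. Since $\limsup a_n^{1/n}=K$, the $\omega_j$ of maximal modulus satisfy $|\omega_j|=K$; let $R$ be the largest value of $r_j$ occurring among those, and let $J$ be the set of indices with $|\omega_j|=K$ and $r_j=R$ (nonempty, because $A$ is not a polynomial). Then, with $s(n):=\sum_{j\in J}c_j(\omega_j/K)^n$,
\[
\phi(n):=\frac{a_n}{K^n\,n^R}=s(n)+o(1),
\]
where $s$ is a finite exponential sum over the \emph{distinct} unit-modulus numbers $\omega_j/K$ with nonzero coefficients. In particular $s$ is bounded, and, by orthogonality of distinct characters, $\frac1N\sum_{n\le N}|s(n)|^2\to\sum_{j\in J}|c_j|^2>0$, so $s(n)\not\to 0$.

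Finally I confront this with (\ref{condition}). Put $\psi(n):=a_n n^m/K^n\in[c,C]$, so $\phi(n)=n^{-(m+R)}\psi(n)$. If $m+R>0$ then $\phi(n)\to 0$, hence $s(n)\to 0$, forcing every $c_j=0$, a contradiction. If $m+R<0$ then $\phi(n)\ge c\,n^{-(m+R)}\to\infty$, contradicting the boundedness of $s(n)+o(1)$. The only remaining possibility is $m+R=0$, i.e. $R=-m$; but $m>1$ is an integer, so $-m$ is a negative integer, contradicting the fact that no $r_j$ is a negative integer. Hence $A(z)$ is not algebraic. I expect the one genuinely delicate point to be the invocation of the structure theorem — making sure one is entitled to a finite expansion with no logarithms and with exponents avoiding the negative integers (precisely the feature exploited by Theorem \ref{jungen}, now used with the two-sided bound (\ref{condition}) in place of an exact asymptotic equivalence); everything afterwards is Cauchy--Hadamard, orthogonality of characters, and a comparison of polynomial growth rates.
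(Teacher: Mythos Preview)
The paper does not give its own proof of this lemma: it attributes the result to Bostan and refers the reader to Lemma~6.3 of \cite{bb22}. So there is nothing in the paper itself to compare your argument against.

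That said, your proof is essentially correct and follows the expected route: invoke the Newton--Puiseux structure theorem for coefficients of an algebraic series, isolate the dominant block, and exploit that the polynomial exponent in $n$ can never be a negative integer. Two minor remarks. The error you write as $O(1/n)$ after each $c_j$ is in general only $O(n^{-1/d})$ for the Puiseux ramification index $d$ at the relevant singularity; this is harmless, since your argument only uses $o(1)$. You also implicitly use $c>0$ (so that $\psi(n)\ge c>0$ in the case $m+R<0$), which the lemma clearly intends but does not spell out. The genuine work is hidden exactly where you flag it: the structure theorem guaranteeing finitely many dominant singularities, pure Puiseux expansions with no logarithmic factors, and leading exponents $r_j\in\mathbb{Q}\setminus\{-1,-2,-3,\dots\}$. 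Granting that black box, the orthogonality-of-characters step and the three-way comparison of $m+R$ with $0$ are clean, and this is almost certainly the same mechanism behind the proof in the cited reference.
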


In other words, in order to prove non-algebraicity of $A(z)$, we do not have to prove that the numbers $a_n$ are asymptotically equal to $\gamma \cdot K^n/n^m$; it suffices to show
that they are {\em between two constant multiples} of $K^n/n^m$. This is what we will do in the next section.

\section{Patterns with long increasing subseqences}
\subsection{The case of length five}
In \cite{bp23}, Jay Pantone and the present author studied classes of permutations avoiding patterns with long increasing subsequences. 
In particular, they considered the set of patterns $A_{k,k}$ consisting of the $k-1$ patterns of length $k$ that start with an increasing subsequence of length $k-1$ and end
in an entry less than $k$. For instance, $A_{5,5}=\{12354,12453,13452,23451\}$.  They proved that for $k\geq 3$, the exponential order of the sequence
$|\av_n(A_{k,k})|$ is $(k-2)^2+1$, that is, $\lim_{n\rightarrow \infty} \sqrt[n]{|\av_n(A_{k,k})|} = (k-2)^2+1$. Based on numerical evidence, they made the following conjecture.

\begin{conjecture} \label{asym} There exists a constant $R$ so that
\[\mid \av_n(A_{5,5}) \mid  \simeq R \cdot \frac{10^n}{n^4} .\]
\end{conjecture}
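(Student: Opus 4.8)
The plan is to go well beyond the two-sided estimate that suffices for Lemma~\ref{bostan} and instead pin down the polynomial factor together with a matching constant, which forces us to identify the exact analytic nature of $A_{A_{5,5}}(z)$ at its dominant singularity. By the B\'ona--Pantone growth-rate result that singularity sits at $z=1/10$. An asymptotic of the conjectured shape $R\cdot 10^n/n^4$ is produced, via the Flajolet--Odlyzko transfer theorem, precisely by a singular expansion whose leading term is a \emph{logarithmic} one, of the type $(1-10z)^{3}\log\!\bigl(1/(1-10z)\bigr)$; this is also exactly the kind of singularity that is compatible with non-algebraicity (indeed with D-finiteness) while excluding a pure algebraic branch point. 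So the first step is to establish that the behaviour at $1/10$ is genuinely of this $(1-10z)^3\log$ form with a nonzero coefficient, and then to read off $R$ from that coefficient.

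The route to such a statement is to reduce $\av(A_{5,5})$ to the monotone class that already carries the correct polynomial order. Note that $\av(1234)\subseteq \av(A_{5,5})$, and by Regev's theorem (Theorem~\ref{regev}) with $k=4$ one has $|\av_n(1234)|\simeq r_4\, 9^n/n^4$, so the \emph{same} exponent $n^{-4}$ governs the length-four monotone class, while its growth rate is $9=(k-2)^2$ rather than $10=(k-2)^2+1$. The second step is therefore to find an explicit structural description of the permutations in $\av(A_{5,5})$ that \emph{do} contain $1234$: avoidance of $A_{5,5}$ means that the top of every increasing subsequence of length four has no smaller entry to its right, so such permutations should decompose as a $1234$-avoiding core together with a controlled family of large, ``final'' entries. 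I would encode this decomposition through the Robinson--Schensted--Knuth correspondence, so that $|\av_n(A_{5,5})|$ becomes a sum over pairs of tableaux of bounded shape with an auxiliary parameter recording the extra entries, mirroring Regev's expression for $|\av_n(12\cdots k)|$ as a sum of squares of standard-tableau counts. The extra parameter is what should multiply the exponential growth from $9$ to $10$ while leaving the tableau shapes, and hence the $n^{-4}$ factor, intact.

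With such a sum in hand, the third step is to run the asymptotic analysis in the style of Regev's original argument — a multidimensional saddle-point and Mellin estimate for the constrained sum of $(f^\lambda)^2$ — tracking both the shift to base $10$ and the polynomial order, and to extract $R$ from the leading term. The main obstacle is exactly the gap between $\Theta(10^n/n^4)$ and true asymptotic equivalence: a squeeze between two constant multiples of $10^n/n^4$ says nothing about convergence of $n^4\,|\av_n(A_{5,5})|/10^n$, and forcing that convergence requires the \emph{exact} singular expansion, including its delicate logarithmic term. That logarithmic factor is the boundary case for the transfer theorem, where the error control must be sharp; establishing it demands either a provably D-finite recurrence for the sequence, from which singularity analysis would deliver both the $n^{-4}$ law and the value of $R$, or a complete adaptation of Regev's saddle-point analysis to the $A_{5,5}$-constrained tableau sum. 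Either way, obtaining the constant $R$ and genuine equivalence rather than mere order-of-magnitude bounds is the crux, and is why the statement is posed as a conjecture.
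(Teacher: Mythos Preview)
The paper does not prove this statement; Conjecture~\ref{asym} is stated and then expressly \emph{bypassed}. Immediately after it the paper says that Conjecture~\ref{asym} would imply Conjecture~\ref{nonalg}, and that the point of the paper is ``to prove Conjecture~\ref{nonalg} without first proving Conjecture~\ref{asym}.'' The argument actually given (Lemma~\ref{firstchain} plus Propositions~\ref{first} and~\ref{second}) only establishes the two-sided bound $c\,10^n/n^4 \le |\av_n(A_{5,5})| \le C\,10^n/n^4$, which is strictly weaker than the asymptotic equivalence in the conjecture. So there is no ``paper's own proof'' to compare against.

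Your proposal, correspondingly, is not a proof either: it is an outline of a research programme, and you yourself say so in the last paragraph. Several of the ingredients you list are accurate heuristics --- the dominant singularity is indeed at $1/10$, and an asymptotic $R\cdot 10^n/n^4$ does correspond under transfer to a leading singular term of the form $c\,(1-10z)^{3}\log\frac{1}{1-10z}$ (the exponent $-4$ being a forbidden value for a pure power, forcing the logarithm). But the two substantive steps you propose, namely (i) an RSK-type encoding of $\av(A_{5,5})$ as a tableau sum with an auxiliary parameter, and (ii) a Regev-style multidimensional saddle-point analysis of that sum sharp enough to isolate the constant, are not carried out and are exactly where the difficulty lies. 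The paper's Lemma~\ref{firstchain} already gives a clean structural decomposition (a $1234$-avoiding front plus an increasing tail, up to a bounded multiplicity), and combined with Regev for $k=4$ this yields the $\Theta(10^n/n^4)$ squeeze via the binomial transform; what would remain for the conjecture is to show that $n^4\,|\av_n(A_{5,5})|/10^n$ actually converges, which that squeeze does not give. Your text identifies this gap correctly but does not close it.
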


Theorem \ref{jungen} shows that  Conjecture \ref{asym} directly implies the following conjecture.

\begin{conjecture} \label{nonalg} 
The generating function $A_{A_{5,5}}(z)$ is not algebraic.
\end{conjecture}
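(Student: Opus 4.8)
The plan is to deduce Conjecture \ref{nonalg} from Lemma \ref{bostan} rather than from the full asymptotic statement of Conjecture \ref{asym}. The point is that Lemma \ref{bostan} only demands a two-sided polynomial-correction bound on the coefficients, so it suffices to establish that there are positive constants $c$ and $C$ with
\[
c\,\frac{10^n}{n^4} \;\leq\; |\av_n(A_{5,5})| \;\leq\; C\,\frac{10^n}{n^4}
\]
for all $n\geq 1$, together with the already-known facts that $|\av_n(A_{5,5})|$ is a sequence of nonnegative reals and that $A_{A_{5,5}}(z)$ is analytic at the origin (the latter is automatic since the growth rate is finite). Here $K=10=(k-2)^2+1$ for $k=5$ is exactly the exponential order established in \cite{bp23}, and $m=4=(k^2-2k)/2$ for $k=5$ is the polynomial exponent predicted by the analogy with Regev's theorem; note $m=4>1$, so the hypothesis of Lemma \ref{bostan} on $m$ is met.

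First I would recall from \cite{bp23} that the exponential order of $|\av_n(A_{5,5})|$ is exactly $10$, which already gives, for every $\varepsilon>0$, an eventual upper bound $|\av_n(A_{5,5})|\leq (10+\varepsilon)^n$ — but that is weaker than what we need, since we must control the polynomial factor $n^{-4}$ as well. So the real work is to obtain matching power-law corrections. The natural route is a structural or injective argument: for the lower bound, exhibit an explicit family of at least $c\cdot 10^n/n^4$ permutations of length $n$ avoiding $A_{5,5}$ — for instance by taking suitable "inflations" or direct-sum-like constructions built from blocks realizing the growth rate $10$, arranged so that the number of admissible block configurations of total size $n$ is of order $n^{-4}\cdot 10^n$ (the $n^{-4}$ typically arising from a bounded number of "free" boundary parameters, one power of $n$ each, or from a $\Theta(1)$-dimensional lattice-point count). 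For the upper bound, one would bound $|\av_n(A_{5,5})|$ by the number of such configurations plus lower-order contributions, using the combinatorial description of $\av(A_{5,5})$ from \cite{bp23} to show no configuration escaping that shape can contribute more than $O(10^n/n^{4})$.

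The main obstacle is precisely pinning down the exponent $m=4$ on both sides: proving the exponential order is $10$ is comparatively soft (a sub/super-multiplicativity or generating-function-radius argument), but separating $10^n/n^4$ from, say, $10^n/n^3$ or $10^n/n^{5}$ requires genuinely understanding the subexponential structure of the class, which is exactly the content that made Conjecture \ref{asym} a conjecture in \cite{bp23}. If a clean two-sided bound with the correct $m$ cannot be extracted, a fallback is to settle for whatever integer $m>1$ one can prove rigorously on both sides — any such $m$ still triggers Lemma \ref{bostan} and yields non-algebraicity — so the theorem may be provable even without resolving the precise polynomial order. I would therefore organize the argument to isolate "$K=10$, some integer $m>1$" as the minimal input, and only afterward attempt to sharpen $m$ to $4$. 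A secondary technical point to check is that the hypotheses of Lemma \ref{bostan} (nonnegativity, analyticity at the origin, and the inequality holding for \emph{all} positive integers $n$, not just eventually) are verified; the "all $n$" clause is harmless since finitely many initial terms can be absorbed by adjusting the constants $c$ and $C$.
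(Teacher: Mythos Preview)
Your overall strategy is right and matches the paper's: bypass Conjecture~\ref{asym} by proving only the two-sided bound
\[
c\,\frac{10^n}{n^4}\;\le\;|\av_n(A_{5,5})|\;\le\;C\,\frac{10^n}{n^4}
\]
and then invoke Lemma~\ref{bostan} with $K=10$ and $m=4$. You also correctly note that nonnegativity, analyticity, and the ``all $n$'' clause are harmless.

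The gap is in how you propose to obtain those bounds. Your sketch speaks of ``inflations or direct-sum-like constructions built from blocks realizing the growth rate $10$'' and of extracting the exponent $4$ from some unspecified lattice-point count; you then concede that pinning down $m=4$ may be out of reach and suggest settling for any integer $m>1$. None of this is a proof, and the fallback does not help: Lemma~\ref{bostan} requires the \emph{same} $m$ in the upper and lower bounds, so without a concrete structure you cannot get matching polynomial corrections at all, let alone with an integer exponent.

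The paper's missing idea is a specific structural decomposition, not a generic inflation argument. If $p\in\av_n(A_{5,5})$ and $p_j$ is the leftmost entry of rank $4$, then everything from $p_j$ on must be increasing, so $p$ splits into a $1234$-avoiding \emph{front} $p_1\cdots p_{j-1}$ and an increasing \emph{tail}. This yields (Lemma~\ref{firstchain})
\[
\tfrac14\,s_n\;\le\;|\av_n(A_{5,5})|\;\le\;s_n,\qquad s_n=\sum_{i=3}^n \binom{n}{i}\,f_i,\quad f_i=|\av_i(1234)|.
\]
Now Regev's Theorem~\ref{regev} gives $\alpha\,9^i/i^4\le f_i\le\beta\,9^i/i^4$, and two short elementary estimates (Propositions~\ref{first} and~\ref{second}) on the binomial-type sum show $s_n$ is between constant multiples of $10^n/n^4$. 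The exponent $4$ is not guessed or conjectured: it is inherited directly from the $i^{-4}$ in Regev's asymptotics for $1234$-avoiders, and the $10=9+1$ comes from the binomial theorem applied to $\sum_i\binom{n}{i}9^i$. That is the content you are missing.
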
 

In this paper, we are going to prove Conjecture \ref{nonalg} without first proving Conjecture \ref{asym}. We will then prove an analogous result for a second infinite sequence of
patterns.

 Let $p=p_1p_2\cdots p_n$ be a permutation.
For any entry $p_h$ of $p$, let the {\em rank} of $p_h$ be the length of the longest increasing subsequence of $p$ that ends in $p_h$. 
Now let us assume that $p$ avoids $A_{5,5}$. Let $p_j$ be the leftmost entry of $p$ that is of rank 4. Then $p_{j+1}>p_j$, or a forbidden pattern is formed.
Similarly, $p_{j+2}>p_{j+1}$ or a forbidden pattern is formed, and so on. So the subsequence $p_jp_{j+1}\cdots p_n$ is an increasing subsequence. 
In other words, each permutation $p\in \av_n(A_{5,5})$ naturally decomposes into two parts; the 1234-avoiding permutation $p_1p_2\cdots p_{j-1}$, which one might call
the {\em front} and the
increasing subsequence $p_jp_{j+1}\cdots p_n$ that one might call the {\em tail}. Note that $j\geq 4$, and if $p$ avoids 1234, then the tail is empty. (In this case, we can set $j=n+1$.)

This leads to the following lemma. Let $f_n=|\av_n(1234)|$ for shortness.
\begin{lemma} \label{firstchain}
The chain of inequalities
\[    \frac{1}{4}  \sum_{i=3}^n  f_{i} {n\choose n-i}  \leq \mid  \av_n(A_{5,5}) \mid \leq  \sum_{i=3}^n  f_{i} {n\choose n- i}  \] holds.
\end{lemma}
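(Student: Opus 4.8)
The plan is to read the sum $\sum_{i=3}^n f_i\binom{n}{n-i}$ as the number of ``gadgets'' of a certain kind and to compare that family with $\av_n(A_{5,5})$ through the front/tail decomposition recalled above. Call a triple $(i,V,\sigma)$ a \emph{gadget} if $i\in\{3,\dots,n\}$, $V$ is an $i$-element subset of $\{1,\dots,n\}$, and $\sigma$ is an arrangement of the elements of $V$ in positions $1,\dots,i$ whose pattern avoids $1234$; to a gadget associate the permutation $p(i,V,\sigma)$ of length $n$ whose first $i$ entries are $\sigma$ and whose remaining entries list $\{1,\dots,n\}\setminus V$ in increasing order. For each fixed $i$ there are exactly $\binom{n}{i}f_i=\binom{n}{n-i}f_i$ gadgets, since the number of valid $\sigma$ depends only on $i$, so the total number of gadgets is $\sum_{i=3}^n f_i\binom{n}{n-i}$. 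I will establish both inequalities by showing that $(i,V,\sigma)\mapsto p(i,V,\sigma)$ is a map \emph{onto} $\av_n(A_{5,5})$ -- in particular that every gadget really does produce an $A_{5,5}$-avoider -- under which each permutation has between one and four preimages.

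For the upper bound, given $p\in\av_n(A_{5,5})$ I would take $i=j-1$ for the position $j$ of the leftmost rank-$4$ entry of $p$ (and $i=n$ when $p$ avoids $1234$), let $V$ be the set of values of $p_1\cdots p_i$, and let $\sigma=p_1\cdots p_i$. The discussion preceding the lemma gives $i\ge 3$, shows $\sigma$ avoids $1234$, and shows $p_{i+1}\cdots p_n$ is increasing, so $(i,V,\sigma)$ is a gadget with $p(i,V,\sigma)=p$; and a gadget is obviously recovered from the permutation it builds, so this assignment is injective and $|\av_n(A_{5,5})|\le\sum_{i=3}^n f_i\binom{n}{n-i}$.

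For the lower bound I would first check that \emph{every} gadget builds an $A_{5,5}$-avoider: a forbidden pattern in $p(i,V,\sigma)$ needs an increasing subsequence $p_{a_1}<p_{a_2}<p_{a_3}<p_{a_4}$ together with a later entry $p_b<p_{a_4}$; if $a_4\le i$ then $a_1,a_2,a_3,a_4$ all lie in $\{1,\dots,i\}$, contradicting that $\sigma$ avoids $1234$, while if $a_4>i$ then $b>a_4>i$ and $p_b>p_{a_4}$ since $p_{i+1}\cdots p_n$ is increasing, so no such pattern exists. Next I would bound how many gadgets build a fixed $p\in\av_n(A_{5,5})$: the admissible values of $i$ are exactly those $i\in\{3,\dots,n\}$ with $p_1\cdots p_i$ avoiding $1234$ and $p_{i+1}\cdots p_n$ increasing, and these form the integer interval $[\max(3,s-1),\,L]$, where $L$ is the length of the longest $1234$-avoiding prefix of $p$ and $s$ is the first index of the longest increasing suffix of $p$. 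When $s\le n-3$ the entries $p_s<p_{s+1}<p_{s+2}<p_{s+3}$ form a $1234$ pattern inside $p_1\cdots p_{s+3}$, forcing $L\le s+2$; when $s\ge n-2$ we have $L\le n\le s+2$ trivially. Either way the interval has at most $(s+2)-(s-1)+1=4$ integers, so counting gadgets according to the permutation they produce yields $\sum_{i=3}^n f_i\binom{n}{n-i}\le 4\,|\av_n(A_{5,5})|$, which is the desired left-hand inequality.

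The part I expect to require the most care is this multiplicity bound: one has to notice that a single $A_{5,5}$-avoider is typically built by several gadgets, and then pin down that the number of them never exceeds four by controlling the overlap between the longest $1234$-avoiding prefix and the longest increasing suffix. Everything else is routine bookkeeping once the gadget framework is in place, although the small values of $n$ for which the prefix/suffix estimates degenerate should be verified directly.
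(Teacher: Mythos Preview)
Your proof is correct and follows essentially the same approach as the paper: both count pairs consisting of a $1234$-avoiding front on a chosen value set together with an increasing tail, show this construction lands in $\av_n(A_{5,5})$ and is surjective, and bound the multiplicity by four via the overlap between the maximal increasing suffix and the maximal $1234$-avoiding prefix. Your treatment of the multiplicity bound (making explicit the interval $[\max(3,s-1),L]$ and the inequality $L\le s+2$) is in fact more careful than the paper's, which simply asserts that the tail length must be one of $a,a-1,a-2,a-3$.
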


\begin{proof} Let $i$ be a integer so that $3\leq i\leq n$. 
Choosing an $n-i$-element subset $T$ of the set $[n]=\{1,2,\cdots ,n\}$, constructing a 1234-avoiding permutation on $[n]\setminus T$ (the ``front part'')
 and postpending it by the elements of $T$ 
written in increasing order (the``back part''), we get a permutation in $\av_n (A_{5,5})$. We get every permutation in  $\av_n (A_{5,5})$ at most four times in this way, because 
for a permutation  $p\in \av_n (A_{5,5})$, the front part of $p$ cannot contain an increasing subsequence of length four or more. In other words, if the last $a$ entries of $p$ form 
an increasing subsequence, but the last $a+1$ entries do not, then there are at most four choices for the number $n-i$ above, namely $a$, $a-1$, $a-2$, and $a-3$. 
of legth $m$ On the other hand, we get every permutation 
 $p\in \av_n (A_{5,5})$ at least once in this way. Indeed, if  $p=p_1p_2\cdots p_n\in \av_n(A_{5,5})$, and $p_j$ is the leftmost entry of rank 4, then $p$ is obtained by
setting $T$ to be the underlying set of the entries $p_jp_{j+1}\cdots p_n$, and selecting the 1234-avoiding permutation $p_1p_2\cdots p_{j-1}$ on the set $[n]\setminus T$.
If $p$ avoids 1234, then $j$ is undefined; in this case we choose $T$ to be the empty  set, and we choose $p$ itself on the set $[n]\setminus T$.
\end{proof}

Lemma \ref{firstchain} squeezes the number $|\av_n(A_{5,5})|$ between two constant multiples of the sum $s_n=\sum_{i=3}^n {n\choose n- i} f_i  =\sum_{i=3}^n {n\choose i} f_i$.
All we need in order to be able to use Lemma \ref{bostan} is to prove that $s_n$ is between two constant multiples of $  \frac{10^n}{n^4}$. We will prove this in an elementary way, 
in two simple propositions, but after those propositions, we point the interested reader to the direction of a more high-brow approach.
Note that Theorem \ref{regev} directly implies that there are absolute constants $\alpha$ and $\beta$ so that
 \begin{equation} \label{defalpha} \alpha \cdot \frac{9^i}{i^4} \leq  f_i \leq \beta \cdot \frac{9^i}{i^4} \end{equation} for all $i$.

 Recall that $f_n=|\av_n(1234)|$ and that $s_n=f_i \sum_{i=3}^n {n\choose n-i} =\sum_{i=3}^{n} f_i{n\choose i}$. 

\begin{proposition} \label{first}
There exists an absolute constant $C_1>0$ so that $s_n\leq C_1 \cdot \frac{10^n}{n^4}$.
\end{proposition}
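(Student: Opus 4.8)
The plan is to feed the upper bound $f_i \le \beta\cdot 9^i/i^4$ from $(\ref{defalpha})$ into the definition of $s_n$ and then estimate the resulting binomial sum by hand. Since $\sum_{i=0}^n {n\choose i}9^i = 10^n$ by the binomial theorem, what must be shown is that inserting the weight $1/i^4$ costs only a factor of order $1/n^4$. This is plausible because the weights ${n\choose i}9^i$ are concentrated around $i=\tfrac{9}{10}n$, a constant fraction of $n$, so that $1/i^4$ is of order $1/n^4$ across the bulk of the sum; the one thing to watch is that the small values of $i$, where $1/i^4$ is much larger, do not contribute too much.

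Concretely, after writing $s_n \le \beta\sum_{i=3}^n {n\choose i}9^i/i^4$ I would split the sum at $i=n/2$. On the upper range $n/2 \le i \le n$ one has $1/i^4 \le 16/n^4$, hence
\[ \sum_{n/2\le i\le n} {n\choose i}\frac{9^i}{i^4} \;\le\; \frac{16}{n^4}\sum_{i=0}^n {n\choose i}9^i \;=\; \frac{16\cdot 10^n}{n^4}. \]
On the lower range $3\le i < n/2$ one has $1/i^4 \le 1/81$, and the crude bounds ${n\choose i}\le 2^n$ and $9^i \le 9^{n/2}=3^n$ give ${n\choose i}9^i \le 6^n$ for each such term; since there are at most $n$ of them,
\[ \sum_{3\le i< n/2} {n\choose i}\frac{9^i}{i^4} \;\le\; \frac{n\cdot 6^n}{81}. \]
Because $10/6>1$, the sequence $n^5\,(6/10)^n$ is bounded over all positive integers $n$, so $n\cdot 6^n/81 \le C''\cdot 10^n/n^4$ for an absolute constant $C''$.

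Adding the two ranges gives $s_n \le \beta\,(16+C'')\cdot 10^n/n^4$, so we may take $C_1=\beta(16+C'')$; the finitely many small $n$ for which the cutoff $i<n/2$ is vacuous are absorbed into the constant. The only mildly delicate step is the lower range, and even there the exponential gap between $6^n$ and $10^n$ makes the estimate extremely loose, so no sharper control of the binomial coefficients is needed. (By contrast, the matching lower bound for $s_n$ will have to retain a window of terms $i$ near $\tfrac{9}{10}n$ and invoke the lower bound in $(\ref{defalpha})$ there.)
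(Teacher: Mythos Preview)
Your proof is correct and follows essentially the same route as the paper's own argument: both split the sum at $i=n/2$, handle the upper range via $1/i^4\le 16/n^4$ together with $\sum_i \binom{n}{i}9^i=10^n$, and dispose of the lower range by the crude bound $\binom{n}{i}9^i\le 2^n\cdot 3^n=6^n$, noting that $n\cdot 6^n$ is eventually dominated by any constant multiple of $10^n/n^4$. The only cosmetic differences are that the paper writes the lower-range bound as $\tfrac{n}{2}\cdot 9^{n/2}\binom{n}{n/2}$ before passing to $6^n$, and that your parenthetical forecast for the lower bound (restricting to a window near $i=\tfrac{9}{10}n$) is more cautious than necessary---the paper simply uses $1/i^4\ge 1/n^4$ over the whole range.
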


\begin{proof} Recall  that   $\beta$ was defined in the previous paragraph, and in (\ref{defalpha}). 
Let us split up $s_n$ to two parts, based on whether $i\leq  n/2$. If $i\leq  n/2$, then we have
\begin{eqnarray*}   \sum_{i=3}^{n/2} {n\choose i} f_i & \leq &  \beta \cdot \sum_{i=3}^{n/2}  \frac{9^i}{i^4} {n\choose i} \\
& \leq & \beta \cdot  \frac{n}{2}  9^{n/2} {n\choose n/2} \\
& \leq & \beta \cdot \frac{n}{2} \cdot  3^n \cdot 2^n  \\
& \leq & C_2 \cdot \frac{10^n}{n^4}.
\end{eqnarray*}

For the part of the sum $s_n$ where $i\geq n/2$, we have
\begin{eqnarray*}   \sum_{n/2}^{n} {n\choose i} f_i & \leq &  \beta \cdot \sum_{n/2}^{n}  \frac{9^i}{i^4} {n\choose i} \\
& \leq & \beta  \sum_{i=n/2}^{n}  \frac{9^i}{(n/2)^4} {n\choose i} \\
&\leq & \frac{C_3}{n^4}  \sum_{i=n/2}^{n} 9^i {n\choose n-i} \\
&\leq & C_3 \frac{10^n}{n^4}.
\end{eqnarray*}
Setting $C_1=C_2+C_3$ completes the proof. 
\end{proof}

\begin{proposition} \label{second}
There exists an absolute constant $c_1>0$ so that $s_n\geq c_1 \cdot \frac{10^n}{n^4}$.
\end{proposition}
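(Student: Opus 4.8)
The plan is to use only the lower bound in (\ref{defalpha}), namely $f_i \geq \alpha\cdot 9^i/i^4$, together with the trivial observation that every index occurring in $s_n$ satisfies $i\leq n$. First I would write
\[
 s_n \;=\; \sum_{i=3}^n \binom{n}{i} f_i \;\geq\; \alpha \sum_{i=3}^n \binom{n}{i}\frac{9^i}{i^4} \;\geq\; \frac{\alpha}{n^4}\sum_{i=3}^n \binom{n}{i} 9^i ,
\]
where the last inequality just replaces the factor $1/i^4$ by the smaller $1/n^4$.

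Next I would evaluate the remaining sum by the binomial theorem. Since $\sum_{i=0}^n \binom{n}{i}9^i = 10^n$, we have
\[
 \sum_{i=3}^n \binom{n}{i} 9^i \;=\; 10^n - 1 - 9n - 81\binom{n}{2}.
\]
Because $10^n$ dominates the quadratic $1+9n+81\binom n2 = \tfrac12(81n^2-63n+2)$, this difference is at least $\tfrac12\,10^n$ for every $n\geq 3$ (a one-line check for the first few values, and clear thereafter). Combining the two displays yields $s_n \geq \tfrac{\alpha}{2}\cdot \frac{10^n}{n^4}$, so $c_1=\alpha/2$ works.

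I do not expect any genuine obstacle here: unlike the upper bound in Proposition \ref{first}, the lower bound does not require locating where the mass of $\sum_i\binom ni 9^i$ concentrates (it sits near $i\approx 9n/10$, which would in fact give the better constant $\alpha\cdot 10^4/(2\cdot 9^4)$), because the full sum $10^n$ already has the right exponential order and the crude estimate $i\leq n$ costs only the polynomial factor $n^4$ that we are aiming for. The sole caveat is that $s_n$ is an empty sum for $n\in\{1,2\}$; since the relevant range is $n\geq 3$ this is harmless, and if one insists on all positive integers $n$ the two exceptional values are absorbed by shrinking $c_1$.
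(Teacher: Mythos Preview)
Your proof is correct and follows essentially the same route as the paper: use the lower bound $f_i\geq \alpha\,9^i/i^4$, replace $1/i^4$ by $1/n^4$, and then apply the binomial theorem to $\sum_i \binom{n}{i}9^i$. The only cosmetic difference is that you explicitly bound the missing terms $i=0,1,2$ to get the concrete constant $c_1=\alpha/2$, whereas the paper simply absorbs them into an unspecified $c_1$.
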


\begin{proof}
 Recall  that   $\alpha$ was defined  in (\ref{defalpha}). 
Note that \begin{eqnarray*}  s_n & \geq &   \alpha \sum_{i=3}^n \frac{9^i}{i^4} \cdot {n\choose n-i} \\
& \geq & \frac{\alpha}{n^4}  \sum_{i=3}^n 9^i {n\choose n-i} \\
& \geq & \frac{c_1}{n^4}  \sum_{i=0}^n 9^i {n\choose n-i} \\
& = & \frac{c_1}{n^4} \cdot 10^n,
\end{eqnarray*}
where we used the binomial theorem in the last step.
\end{proof}

{\bf Remark.} If sequences  $\{a_n\}_n$ and $\{b_n\}_n$ of positive real numbers are related by the equality $b_n=\sum_{k=0}^n {n\choose k}a_k$, then the sequence
$\{b_n\}_n$ is sometimes called the {\em binomial transform} of the sequence $\{a_n\}_n$. This is a well-studied transform that is the subject of the book \cite{b18}. 
In particular, if $A(z)$ and $B(z)$ are the respective generating functions of the two sequences, then 
\[B(z)=\frac{1}{1-z} A\left(\frac{z}{1-z}\right).\] This equality could be used to analyze the singularities of $B(z)$ and therefore, to obtain the asymptotics of its coefficients.

Returning to the task at hand, the proof of the main theorem of this section is now immediate.
\begin{theorem} \label{nonalg-theorem}
The generating function $A_{A_{5,5}}(z)$ is not algebraic.
\end{theorem}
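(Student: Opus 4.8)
The plan is to assemble the pieces that have already been established in this section. By Lemma~\ref{firstchain}, the counting sequence $|\av_n(A_{5,5})|$ is squeezed between $\frac14 s_n$ and $s_n$, where $s_n = \sum_{i=3}^n \binom{n}{i} f_i$ and $f_i = |\av_i(1234)|$. By Propositions~\ref{first} and~\ref{second}, there are absolute constants $c_1, C_1 > 0$ with $c_1 \cdot 10^n/n^4 \le s_n \le C_1 \cdot 10^n/n^4$ for all $n$. Combining these, I would write
\[
\frac{c_1}{4} \cdot \frac{10^n}{n^4} \;\le\; |\av_n(A_{5,5})| \;\le\; C_1 \cdot \frac{10^n}{n^4}
\]
for all $n$ large enough (and, after shrinking $c_1/4$ and enlarging $C_1$ if necessary, for all positive $n$).

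This chain of inequalities is precisely the hypothesis~(\ref{condition}) of Lemma~\ref{bostan}, with $K = 10$, $m = 4$, $c = c_1/4$, and $C = C_1$. The series $A_{A_{5,5}}(z) = \sum_{n \ge 0} |\av_n(A_{5,5})| z^n$ has nonnegative integer coefficients and is analytic at the origin (its radius of convergence is $1/10$ by the exponential-order result of B\'ona--Pantone, or simply because the coefficients grow geometrically). Since $m = 4 > 1$ is an integer, Lemma~\ref{bostan} applies verbatim and yields that $A_{A_{5,5}}(z)$ is not algebraic.

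There is really no hard step left: everything substantive was done in Lemma~\ref{firstchain} and in Propositions~\ref{first} and~\ref{second}, which in turn rest on Regev's asymptotics (Theorem~\ref{regev}) for $|\av_n(1234)|$. The only minor point of care is the quantifier in Lemma~\ref{bostan}: the statement as quoted requires the two-sided bound to hold \emph{for all positive integers $n$}, whereas Propositions~\ref{first} and~\ref{second} and the bound~(\ref{defalpha}) are naturally phrased for all $n$ but with constants absorbed. Since only finitely many initial terms are involved and each $|\av_n(A_{5,5})|$ is a positive integer, one can always decrease the lower constant and increase the upper constant to cover the small cases, so this is a non-issue. Thus the proof is a one-line invocation of Lemma~\ref{bostan} once the two propositions are in hand.
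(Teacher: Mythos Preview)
Your proof is correct and follows essentially the same approach as the paper: combine Lemma~\ref{firstchain} with Propositions~\ref{first} and~\ref{second} to squeeze $|\av_n(A_{5,5})|$ between two constant multiples of $10^n/n^4$, then invoke Lemma~\ref{bostan} with $K=10$ and $m=4$. Your treatment is slightly more explicit about the constants and about adjusting them to cover small $n$, but the argument is the same.
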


\begin{proof} Lemma \ref{firstchain} shows that $|\av_n(A_{5,5})|$ is between two constant multiples of $s_n$, while Propositions \ref{first} and \ref{second} prove that
$s_n$ is between two constant multiples of $10^n/n^4$. Therefore, there exist absolute constants $c>0$ and $C>0$ so that 
\begin{equation} \label{squeeze-chain} c\cdot \frac{10^n}{n^4} \leq |\av_n(A_{5,5})| \leq  C\cdot \frac{10^n}{n^4} \end{equation} for all $n$.

Therefore, by Lemma \ref{bostan},  $A_{A_{5,5}}(z)$ is not algebraic. \end{proof}

\subsection{The case of length $k$}
For general $k\geq 3$, the methods that we used in the last section yield the following.
\begin{theorem} \label{general}
Let $k\geq 3$. Then there are  absolute constants $c_k$ and $C_k$ so that the chain of inequalities
\[c_k \cdot \frac{((k-2)^2+1)^n}{n^{(k^2-4k+3)/2} } \leq \mid \av_n(A_{k,k}) \mid  \leq C_k \cdot \frac{((k-2)^2+1)^n}{n^{(k^2-4k+3)/2} } \]
\nopagebreak
holds.
\end{theorem}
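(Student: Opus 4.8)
The plan is to repeat the argument of the length-five case verbatim, keeping track of the exponents that change with $k$. First I would generalize the structural decomposition: fix $k \ge 3$ and let $p \in \av_n(A_{k,k})$. Assigning to each entry its rank (the length of the longest increasing subsequence ending there), let $p_j$ be the leftmost entry of rank $k-1$; the same forced-increase argument as before shows that $p_j p_{j+1} \cdots p_n$ must be increasing, so $p$ splits into a front $p_1 \cdots p_{j-1}$ avoiding $1 2 \cdots (k-1)$ and an increasing tail. This yields, by the same counting-with-overcounting argument (now each permutation is produced at most $k-1$ times, since the front cannot contain an increasing subsequence of length $k-1$), the analogue of Lemma \ref{firstchain}:
\[
\frac{1}{k-1} \sum_{i=k-2}^{n} g_i \binom{n}{n-i} \le \mid \av_n(A_{k,k}) \mid \le \sum_{i=k-2}^{n} g_i \binom{n}{n-i},
\]
where $g_i = \mid \av_i(1 2 \cdots (k-1)) \mid$. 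Thus $\mid \av_n(A_{k,k}) \mid$ is squeezed between two constant multiples of $t_n := \sum_{i} \binom{n}{i} g_i$, the binomial transform of the sequence $g_i$.

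Next I would invoke Theorem \ref{regev} with $k$ replaced by $k-1$: there are absolute constants $\alpha_k, \beta_k > 0$ with
\[
\alpha_k \cdot \frac{(k-2)^{2i}}{i^{((k-1)^2 - 2(k-1))/2}} \le g_i \le \beta_k \cdot \frac{(k-2)^{2i}}{i^{((k-1)^2 - 2(k-1))/2}}
\]
for all $i \ge 1$, where the exponent simplifies to $((k-1)(k-3))/2 = (k^2 - 4k + 3)/2$. Write $D = (k-2)^2$ and $m_k = (k^2-4k+3)/2$. Then the lower bound is the easy direction, exactly as in Proposition \ref{second}: drop the $1/i^{m_k}$ factor down to $1/n^{m_k}$, extend the sum to run from $i=0$ to $n$ (only adding finitely many terms, absorbed into the constant), and apply the binomial theorem to get $t_n \ge (c_k/n^{m_k})(D+1)^n = (c_k/n^{m_k})((k-2)^2+1)^n$. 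For the upper bound, mimic Proposition \ref{first}: split the sum at $i = n/2$. On the tail $i \ge n/2$ replace $1/i^{m_k}$ by $2^{m_k}/n^{m_k}$, pull it out, and bound $\sum_i D^i \binom{n}{i} \le (D+1)^n$ by the binomial theorem. On the head $i \le n/2$ bound crudely: $\binom{n}{i} \le \binom{n}{n/2} \le 2^n$, $D^i \le D^{n/2}$, there are at most $n/2$ terms, and $1/i^{m_k} \le 1$, giving at most $(n/2) \cdot 2^n \cdot D^{n/2} = (n/2)(2\sqrt{D})^n$; since $2\sqrt{D} = 2(k-2) < (k-2)^2 + 1 = D+1$ for all $k \ge 3$ (as $(k-2)^2 + 1 - 2(k-2) = (k-3)^2 + \text{(something nonnegative)}$; precisely $(k-3)^2 \ge 0$ gives $(k-2)^2 - 2(k-2) + 1 \ge 0$), this head term is exponentially negligible and in particular bounded by $C_k (D+1)^n / n^{m_k}$.

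Combining the two squeezes gives the claimed chain of inequalities with $c_k = \alpha_k/(k-1)$ times the binomial-transform constant and $C_k$ an explicit sum of the head and tail constants, completing the proof. I do not expect any real obstacle: the only point needing a (trivial) check is the strict inequality $2(k-2) < (k-2)^2 + 1$, which controls the head of the split sum and holds for every integer $k \ge 3$ since it is equivalent to $(k-3)^2 \ge 0$; everything else is a routine transcription of Section 3.1 with $k-1$ in place of $4$ and $(k-2)^2$ in place of $9$. One should also note that, exactly as in Corollary \ref{evenmonotone}, whenever $m_k = (k^2-4k+3)/2$ is an integer greater than $1$ — equivalently $k$ odd and $k \ge 5$ — Lemma \ref{bostan} applies and yields the non-algebraicity of $A_{A_{k,k}}(z)$, generalizing Theorem \ref{nonalg-theorem}.
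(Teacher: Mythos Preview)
Your proposal is correct and follows exactly the paper's own route: the paper's proof is literally the one-line remark that Lemma~\ref{firstchain} and Propositions~\ref{first} and~\ref{second} generalize with $k-1$ in place of $4$, $(k-2)^2$ in place of $9$, and $(k^2-4k+3)/2$ in place of $4$, which is precisely what you carry out. One small slip to fix: the strict inequality $2(k-2) < (k-2)^2+1$ that you need for the head of the split sum is equivalent to $(k-3)^2>0$, not $(k-3)^2\ge 0$, so it fails at $k=3$; there, however, $m_3=0$ and $g_i\equiv 1$, so $t_n \le \beta_3\sum_i \binom{n}{i}=\beta_3\cdot 2^n$ directly and no splitting is needed.
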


\begin{proof} Analogous to the proof of (\ref{squeeze-chain}) in the proof of Theorem \ref{nonalg-theorem}. In that theorem, we had $k=5$, which yielded $(k-2)^2+1=10$. Note that Lemma \ref{firstchain} and Propositions \ref{first} and \ref{second} all \hspace{-0.2em}generalize \hspace{-0.2em}for \hspace{-0.2em}larger $k$.
\qedhere
\end{proof}

When $k=3$, then $A_{3,3}=\{132,231\}$, and it is well-known (see, for instance, Exercise 14.2 in \cite{b23}) that $|\av_n(A_{3,3})|=2^{n-1}$, in accordance with Theorem \ref{general}.
When $k=4$, then $A_{4,4}=\{1243,1342,2341\}$. It follows from Theorem 3.1. in \cite{m16} that
\[A_{A_{4,4}}(z)=\frac{1+z-\sqrt{1-6z+5z^2}}{2(2z-z^2)} .\] It follows from this formula that 
\[|\av_n(A_{4,4})|\simeq C \cdot \frac{5^n}{n^{3/2}},\] for some absolute constant $C$, 
again   in accordance with Theorem \ref{general}. See Sequence A033321 in \cite{sloane} for the many occurrences
of the sequence $|\av_n(A_{4,4})|$.

For larger $k$, we have the following generalization of Theorem  \ref{nonalg-theorem}.
\begin{theorem} Let $k$ be an odd integer so that $k>3$ holds.
Then the generating function $A_{A_{k,k}}(z)$ is not algebraic.
\end{theorem}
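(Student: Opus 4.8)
The plan is to reduce this to the length-five case by invoking the general squeeze estimate already proved as Theorem~\ref{general}. Concretely, for odd $k>3$ write $q_k = A_{k,k}$. Theorem~\ref{general} gives absolute constants $c_k, C_k>0$ with
\[
c_k \cdot \frac{((k-2)^2+1)^n}{n^{(k^2-4k+3)/2}} \;\leq\; |\av_n(A_{k,k})| \;\leq\; C_k \cdot \frac{((k-2)^2+1)^n}{n^{(k^2-4k+3)/2}}
\]
for all $n$. So to apply Lemma~\ref{bostan} to $A(z) = A_{A_{k,k}}(z)$ — a power series with nonnegative integer coefficients, analytic at the origin — I need the exponent $m = (k^2-4k+3)/2$ to be an integer with $m>1$.

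The first step is the arithmetic check that $m$ is an integer when $k$ is odd. Writing $k = 2\ell+1$, we get $k^2-4k+3 = (k-1)(k-3) = (2\ell)(2\ell-2) = 4\ell(\ell-1)$, which is divisible by $2$ (indeed by $4$), so $m = (k^2-4k+3)/2 = 2\ell(\ell-1)$ is a nonnegative integer. The second step is checking $m>1$: since $k>3$ means $\ell \geq 2$, we get $m = 2\ell(\ell-1) \geq 2\cdot 2\cdot 1 = 4 > 1$. (For $k=5$, $\ell=2$, this recovers $m=4$, matching Theorem~\ref{nonalg-theorem}.) The third step is simply to feed the chain of inequalities from Theorem~\ref{general} into Lemma~\ref{bostan}, with $K = (k-2)^2+1$, $c = c_k$, $C = C_k$, and $m$ as above, and conclude that $A_{A_{k,k}}(z)$ is not algebraic.

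Honestly, there is no real obstacle here: the entire content is the congruence $k^2-4k+3 \equiv 0 \pmod 2$ for odd $k$, together with the bound $m>1$ for $k>3$. The genuine mathematical work — establishing the two-sided bound of the form $c_k K^n / n^m \leq |\av_n(A_{k,k})| \leq C_k K^n / n^m$ — has already been done in Theorem~\ref{general}, which in turn rests on Lemma~\ref{firstchain} (the front/tail decomposition) and the Regev-type asymptotics for $|\av_n(12\cdots(k-1))|$ packaged as in~(\ref{defalpha}). One subtlety worth a sentence in the writeup: why odd $k$ and not even? For even $k$, $k^2-4k+3 = (k-1)(k-3)$ is a product of two odd numbers, hence odd, so $m$ is a half-integer and Lemma~\ref{bostan} does not apply directly; this is precisely the parity phenomenon already visible in Corollary~\ref{evenmonotone} versus the monotone pattern $12\cdots k$. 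So the proof is three lines: verify $m \in \mathbb{Z}$, verify $m>1$, cite Theorem~\ref{general} and Lemma~\ref{bostan}.
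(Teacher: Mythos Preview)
Your proposal is correct and follows essentially the same approach as the paper: verify that $(k^2-4k+3)/2$ is an integer for odd $k$, check it exceeds $1$ when $k>3$, then feed the bounds from Theorem~\ref{general} into Lemma~\ref{bostan}. If anything, you are slightly more careful than the paper, which only notes that the exponent is a \emph{positive} integer whereas Lemma~\ref{bostan} actually requires $m>1$; your computation $m\geq 4$ for $k\geq 5$ covers this cleanly.
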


\begin{proof} If $k$ is an odd integer, then $(k^2-4k+3)/2$ is an integer. If, in addition, the inequality  $k>3$ holds,   then $(k^2-4k+3)/2$ is a {\em positive} integer. 
This implies that  we can apply  Lemma \ref{bostan}  using  the upper and lower bounds that we  obtain from  Theorem \ref{general},  completing \hspace{-0.2em}the proof of \hspace{-0.2em}the present theorem.
\qedhere
\end{proof} 

\section{Classes Wilf-equivalent to $\av(A_{k,k})$.}

Let $S$ and $T$ be two sets of patterns.
We say that the permutation classes $\av(S)$ and $\av(T)$ are {\em Wilf-equivalent} if for all $n$, the equality $|\av_n(S)|=|\av_n(T)|$ holds. 
Let $B_{5,5}=\{21354,21453, 31452,32451\}$, and define $B_{k,k}$ in an analogous way. That is, $B_{k,k}$ is the set of $k-1$ patterns of length
$k$ whose first $k-1$ entries form a $213\cdots (k-1)$ pattern, and whose last entry is less than $k$. 

\begin{theorem}  \label{thebees}
For all $k\geq 4$, the classes $\av(A_{k,k})$ and $\av(B_{k,k})$ are Wilf-equivalent. In particular, for all odd $k\geq 5$, the generating function
\[A_{B_{k,k}}(z)=\sum_{n\geq 0}| \av_n(B_{k,k})| z^n \] is not algebraic.
\end{theorem}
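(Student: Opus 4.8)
The plan is to establish a length-preserving bijection between $\av_n(A_{k,k})$ and $\av_n(B_{k,k})$ for each $n$, which immediately gives Wilf-equivalence, and then to transfer the non-algebraicity conclusion from $A_{A_{k,k}}(z)$ to $A_{B_{k,k}}(z)$ using that their generating functions coincide coefficientwise. The second half is automatic: once $|\av_n(A_{k,k})| = |\av_n(B_{k,k})|$ for all $n$, the two generating functions are literally equal, so the previous theorem (non-algebraicity of $A_{A_{k,k}}(z)$ for odd $k>3$) applies verbatim. So the entire content is the Wilf-equivalence, and the natural route is to exploit the same structural decomposition used in the proof of Lemma~\ref{firstchain}.

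First I would recall the structure theorem underlying Lemma~\ref{firstchain}, generalized to all $k$: a permutation $p$ avoids $A_{k,k}$ if and only if it decomposes uniquely as a ``front'' that avoids $12\cdots(k-1)$ followed by an increasing ``tail'', where the front ends just before the leftmost entry of rank $k-1$. The analogous statement for $B_{k,k}$ should be that $p$ avoids $B_{k,k}$ if and only if its front avoids $213\cdots(k-1)$ and its tail (everything from the leftmost entry playing the role of the final large entry of a $213\cdots(k-1)$ pattern) is increasing; I would need to verify that the same rank-based argument goes through with $213\cdots(k-1)$ in place of $12\cdots(k-1)$, using the fact that an entry $p_j$ completes a $213\cdots(k-1)$ pattern exactly when there is such a pattern of length $k-1$ ending at $p_j$, and that once this happens every subsequent entry must exceed $p_j$ or a pattern in $B_{k,k}$ forms. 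Next I would invoke the classical Wilf-equivalence $|\av_m(12\cdots(k-1))| = |\av_m(213\cdots(k-1))|$ — this is a well-known result (the monotone pattern is Wilf-equivalent to $213\cdots(k-1)$; more generally all ``layered-type'' shifts like this are covered by the Backelin--West--Xin theorem or by West's earlier results on $12\cdots j \, q$-type equivalences) — which provides, for each $m$, a bijection $\phi_m: \av_m(12\cdots(k-1)) \to \av_m(213\cdots(k-1))$. Finally I would assemble the global bijection: given $p \in \av_n(A_{k,k})$ with front on an index/value configuration of size $m = j-1$ and increasing tail on the complementary set, apply $\phi_m$ to the front (reading off its pattern, then placing the image back on the same underlying value set and index positions) and keep the tail as is; the decomposition characterization for $B_{k,k}$ guarantees the result lies in $\av_n(B_{k,k})$, and invertibility follows from uniqueness of both decompositions.

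The main obstacle will be pinning down the precise structural characterization of $\av_n(B_{k,k})$ and checking it meshes correctly with the bijection $\phi_m$ — in particular, one must be careful that the ``front/tail'' split for $B_{k,k}$ is determined only by the front's \emph{pattern} (i.e.\ by membership in $\av(213\cdots(k-1))$) and by the tail being increasing, with no further interaction between the value sets of the two parts. If the split point for $B_{k,k}$ depended on more than the pattern of the front, applying $\phi_m$ internally could move the split, breaking the bijection; so the key lemma to nail is that $p \in \av(B_{k,k})$ iff $p$ factors as (arbitrary $213\cdots(k-1)$-avoiding front) followed by (arbitrary increasing tail of values all larger than... ) — actually the cleanest formulation, mirroring the $A_{k,k}$ case, is that the tail entries need not dominate the front in value, only that the tail is increasing and the front avoids $213\cdots(k-1)$; I would prove both directions of this directly from the definition of $B_{k,k}$. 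Once that lemma is in hand, the counting identity $|\av_n(B_{k,k})| = \sum_{m} |\av_m(213\cdots(k-1))|\binom{n}{m}$ (with the appropriate range and the same multiplicity subtlety as in Lemma~\ref{firstchain}, if any) matches term-by-term the identity for $A_{k,k}$ via the classical equivalence, giving $|\av_n(A_{k,k})| = |\av_n(B_{k,k})|$; alternatively the explicit bijection makes the multiplicity issue moot. I would then close by citing the preceding non-algebraicity theorem for odd $k \ge 5$.
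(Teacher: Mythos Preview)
Your overall architecture is right, and you correctly isolate the crux: one must check that applying a front-bijection does not move the split point.  Unfortunately, your resolution of that crux is where the argument breaks.  You assert that ``invertibility follows from uniqueness of both decompositions,'' but uniqueness of the canonical split on each side is not enough: you also need that the canonical $B_{k,k}$-split of the image equals the canonical $A_{k,k}$-split of the source.  With an \emph{arbitrary} Wilf-equivalence bijection $\phi_m:\av_m(12\cdots(k-1))\to\av_m(213\cdots(k-1))$ this can fail.  For a concrete failure at $k=4$, take $p=31245\in\av_5(A_{4,4})$; the leftmost rank-$3$ entry is $p_4=4$, so the front is $312$ and the tail is $45$.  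If $\phi_3$ happens to send $312\mapsto 123$, the image is $12345$, whose canonical $B_{4,4}$-split has empty tail.  Inverting from that split does not recover $31245$; indeed, the identity permutation $12345\in\av_5(A_{4,4})$ (front $12$, tail $345$) can also be sent to $12345$, so injectivity is lost.  Your counting alternative has the same defect in disguise: the identity $\sum_m |\av_m(12\cdots(k-1))|\binom{n}{m}=\sum_m |\av_m(213\cdots(k-1))|\binom{n}{m}$ equates the two \emph{multiplicity-weighted} counts, not the cardinalities, and you have no control over the multiplicity distributions.

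The paper patches exactly this hole, and the patch is the real content of the proof.  First, it splits not at the leftmost rank-$(k-1)$ entry but at the \emph{rightmost descent} $i$: the front $p_1\cdots p_i$ still avoids $12\cdots(k-1)$, and now the split point is determined by a feature (position of the last descent) that survives any front-bijection which fixes the last front entry.  Second, it invokes not an arbitrary Wilf bijection but a specific one (Lemma~\ref{fixing}, a West/Simion--Schmidt-type map) with the property that all entries of co-rank at most $k-3$ are fixed; in particular the last entry $p_i$ (co-rank $1$) is fixed, so $p'_i=p_i>p_{i+1}$ and the rightmost descent of the image is again at $i$.  That is the missing ingredient: you need a bijection between $\av(12\cdots(k-1))$ and $\av(213\cdots(k-1))$ that preserves enough positional data to pin the split, and the existence of such a bijection is a lemma you have to supply, not a consequence of the bare Wilf equivalence.
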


For $k=4$, the claim of Theorem \ref{thebees} is proved in \cite{bs17}. Our argument can be viewed as a generalization of that proof, but it is self-contained so that the 
reader does not have to understand the more general terminology and structural analysis presented in that paper.
The key element in our proof is the following lemma. It was stated and proved in a different form by Julian West in \cite{w90}.
The precise form in which we state and prove it is implicit in \cite{w90}, but for our purposes, the explicit form below is necessary.
 
Note that if $p=p_1p_2\cdots p_n$ is a permutation, then we say that $p_i$ is a {\em right-to-left maximum} of $p$ if there is no $j>i$ so 
that $p_j>p_i$. In general, if the longest increasing subsequence of $p$ that starts at $p_i$ is of length $m$, then we say that $p_i$ is of {\em co-rank} $m$.
So right-to-left maxima are precisely the entries of co-rank 1. 

\begin{lemma} \label{fixing}  Let $\ell\geq 3$. Then there exists a bijection  $g_{n,\ell}:=\av_n(123\cdots \ell) \rightarrow \av_n(213\cdots \ell)$ so that
for all $p\in  \av_n(123\cdots \ell) $, and for all $m\leq \ell-2$, the permutations $p$ and $g_{n,\ell}(p)$ have the same set of entries of co-rank $m$, and those entries are in the same positions.
In other words, $g_{n,\ell}$ leaves the entries  of $p$ that are of co-rank $m$ fixed.
\end{lemma}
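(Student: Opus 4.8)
The plan is to construct the bijection $g_{n,\ell}$ directly by describing how it acts on a single entry and then iterating, following the spirit of West's ``inflation/deflation'' argument but keeping careful track of co-ranks. First I would observe what the hypothesis really demands: the patterns $123\cdots\ell$ and $213\cdots\ell$ differ only in the relative order of their two smallest-valued, leftmost two entries, so an occurrence of $213\cdots\ell$ is an occurrence of $123\cdots\ell$ in which the first two entries have been swapped. The entries we are forbidden to move are those of co-rank $\le \ell-2$; equivalently, an entry may be relocated by $g_{n,\ell}$ only if it begins an increasing run of length $\ell-1$ or more, i.e.\ only if it could serve as the ``$1$'' of a $123\cdots\ell$ pattern. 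The idea is that the map should act by locally repairing exactly those bad spots.

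The key steps, in order, would be: (1) given $p\in\av_n(123\cdots\ell)$, consider its image under the candidate map obtained by scanning from left to right and, whenever we find an entry $p_i$ of co-rank $\ge \ell-1$ that together with the entry immediately to its left (or some canonically chosen earlier entry) forms an ascent that we must eliminate, perform a single transposition-type local move; (2) prove that after this move the result avoids $213\cdots\ell$ — here the point is that any $213\cdots\ell$ pattern would have its ``$21$'' sitting at a pair of entries at least one of which has co-rank $\ge \ell-2$, and we have arranged that no such configuration survives; (3) prove the move does not create a $123\cdots\ell$ pattern going the other way, so the map is well defined as a map into $\av_n(213\cdots\ell)$; (4) exhibit the inverse by the symmetric procedure, scanning and un-swapping, which shows $g_{n,\ell}$ is a bijection; and (5) verify the co-rank invariant: every transposition we perform involves only entries of co-rank $\ge \ell-1$, and swapping two such entries (which are consecutive in value or otherwise ``interchangeable'') changes the longest increasing subsequence starting at any entry by at most a bounded amount and, crucially, never lowers a co-rank from $\ge \ell-1$ to $\le \ell-2$ nor raises one the other way, so the set of positions of entries of each co-rank $m\le \ell-2$ is preserved setwise and pointwise.

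I would borrow as much as possible from West's construction in \cite{w90} rather than reinventing it: West's bijection between $\av_n(123\cdots\ell)$ and $\av_n(213\cdots\ell)$ is exactly of this local-surgery type, so steps (1)--(4) are essentially citable, and the only genuinely new content is step (5), the co-rank bookkeeping that the lemma's ``precise form'' requires. Concretely, I would isolate the single elementary operation West uses and prove a one-line lemma: if $x$ and $y$ are the two entries West's operation interchanges at some stage, then both have co-rank $\ge \ell-1$ before the operation, and for every entry $z$ of the permutation the value ``length of the longest increasing subsequence starting at $z$'' is unchanged unless that value was $\ge \ell-1$ (in which case it may change but stays $\ge \ell-1$, or more precisely $\ge \ell-2$). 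Iterating over all the local moves then yields the claimed invariant for all $m\le \ell-2$.

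The main obstacle I expect is step (5), and within it the subtle case analysis showing a local swap cannot secretly demote an entry's co-rank to exactly $\ell-2$ or below: co-rank is a global statistic, so a single transposition far to the right could in principle shorten an increasing subsequence starting far to the left. The resolution should be that the two swapped entries are adjacent in value (or West's operation chooses them so that one lies ``just below'' the other in a controlled interval), so any increasing subsequence passing through one can be rerouted through the other with the same length; making this routing argument airtight — including the boundary cases where the subsequence starts at one of the two swapped entries themselves — is where the real work lies. Everything else (well-definedness, bijectivity) I would present as a streamlined account of West's argument, and the forbidden-pattern verifications in steps (2)--(3) reduce to the observation that the minimal ``$21$'' of a $213\cdots\ell$ occurrence forces one of its entries to have co-rank $\ge \ell-2$, contradicting the fixed-point property we are simultaneously establishing.
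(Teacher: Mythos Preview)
Your proposal is a plan rather than a proof, and the plan leaves open precisely the step that carries the lemma's content. In step~(1) you never specify the local move: ``a single transposition-type local move'' between \emph{which} two entries, chosen by \emph{what} rule? Without that, steps~(2)--(4) cannot be checked. More seriously, you explicitly flag step~(5), the preservation of co-ranks $\le \ell-2$ under your swaps, as the ``real work'' that remains to be done. But that invariant is exactly what distinguishes this lemma from the bare Wilf-equivalence $|\av_n(12\cdots\ell)|=|\av_n(213\cdots\ell)|$; with step~(5) open, nothing beyond Wilf-equivalence has been established. (Incidentally, your co-rank bookkeeping in step~(2) is looser than needed: in any $213\cdots\ell$ occurrence \emph{both} entries of the initial ``$21$'' have co-rank $\ge \ell-1$, not merely $\ge \ell-2$.)

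The paper sidesteps the difficulty you anticipate by not using transpositions at all. Its map is a Simion--Schmidt-style greedy refilling: freeze every entry of $p$ whose co-rank is at most $\ell-2$, then refill the remaining slots from right to left, at each step inserting the largest still-available value that acquires co-rank $\ge \ell-1$ at that slot; the inverse freezes the same entries and writes the remaining values into the remaining slots in decreasing order. Because the low-co-rank entries are frozen \emph{by construction}, and because any entry larger than and to the right of a frozen entry $e$ must itself have co-rank strictly less than that of $e$ (hence is also frozen), the increasing subsequences witnessing the co-rank of each frozen entry survive intact, and no refilled value to its right can exceed $e$. Thus the global ``rerouting'' problem you worry about never arises. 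Your swap-based route might be completable, but as written it is not a proof, and it is strictly more work than the direct greedy construction.
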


Note that Lemma \ref{fixing} proves in particular that the two permutation classes above are Wilf-equivalent, but we will need the much stronger claim of the Lemma. 
Also note that for $k=3$, Lemma \ref{fixing} and its proof reduce to those of the classic Simion-Schmidt bijection \cite{ss85}.

\begin{proof} (of Lemma \ref{fixing}).
Let $p\in \av_n(123\cdots \ell)$. Leave the entries of $p$ that are of co-rank $\ell-2$ or less unchanged. Fill the remaining slots with the remaining entries, going from right to left, so that in every step, we place the largest entry that can be placed at the given position without getting co-rank $\ell-2$ or less. This results in a permutation $g_{n,\ell}(p)\in    \av_n(213\cdots \ell)$.
The map $g_{n,\ell}(p)$ is a bijection, because it has an inverse. Indeed, if $w\in  \av_n(213\cdots \ell)$, we get the unique preimage of $w$ by leaving its entries of co-rank $\ell-2$ or less fixed and writing the remaining entries into the \hspace{-0.2em}remaining slots in \hspace{-0.2em}decreasing order. 
\end{proof}

\begin{proof} (of Theorem \ref{thebees}).  We construct a bijection $h_n:\av_n(A_{k,k}) \rightarrow \av_n(B_{k,k})$. 
Let $p=p_1p_2\cdots p_n$, and let $i$ be the location of the rightmost descent of $p$, that is, the largest index so that $p_i>p_{i+1}$. Note that this means that the string
$p_1p_2\cdots p_i$ avoids the increasing pattern $12\cdots (k-1)$. We define $h_n(p)$ as the
concatenation of $g_{i,k-1}(p_1p_2\cdots p_i)$ and the increasing sequence $p_{i+1}\cdots p_n$. It is clear that $h_n(p)\in \av_n(B_{k,k})$, since if $h_n(p)$ contains a copy of a 
$21\cdots (k-1)$-pattern, that copy must end strictly on the right of position $i$, and $h_n(p)$ increases in all those positions.  

In order to prove that $h_n:\av_n(A_{k,k}) \rightarrow \av_n(B_{k,k})$ is indeed a bijection, first note that the rightmost descent of $h_n(p)$ is also in position $i$. As we know that 
$h_n$ fixes the increasing subsequence $p_{i+1}\cdots p_n$, it suffices to prove that if $x$ is the entry  in position $i$ of $h_n(p)$, then $x>p_{i+1}$. Note that $x$ is the rightmost entry
of $g_{i,k-1}(p_1p_2\cdots p_i)$. Recall that $g_i$ fixes all entries of $p_1p_2\cdots p_i$, except those that have co-rank $k-2$ in the string $p_1p_2\cdots p_i$. On the other hand, 
$p_i$ is the rightmost entry in $p_{i+1}\cdots p_n$, so its co-rank there is $1<k-2$.  So  we are done, since
$x=p_i>p_{i+1}$. 

We can now prove that the function $h_n:\av_n(A_{k,k}) \rightarrow \av_n(B_{k,k})$ is indeed a bijection by showing that it has an inverse. Let $w=w_1w_2\cdots w_n\in  \av_n(B_{k,k})$.
Let us find  the largest index $i$ so that the inequality  $w_i>w_{i+1}$ holds. We can then obtain  the unique permutation $p$ that satisfies  the equality $h_n(p)=w$  by taking the permutation $g_i^{-1}(w_1w_2\cdots w_i)$ and postpending it by string
 $p_{i+1}p_{i+2}\cdots p_n$.  \hspace{-31pt}
\end{proof}

\section{Further directions}
There are other several other patterns that are Wilf-equivalent with the monotone pattern $12\cdots k$. For our purposes, the following result is particularly relevant.
\begin{theorem} \label{west-backelin} Let $2\leq m\leq k-1$. Then for all $n$,  the equality
\[|\av_n(123\cdots k)| =| \av_n (m(m-1)\cdots 21 (m+1)(m+2)\cdots k)| \]
holds. 
\end{theorem}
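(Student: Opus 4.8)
The statement is the Wilf-equivalence of the identity pattern $123\cdots k$ with the pattern $m(m-1)\cdots 21(m+1)(m+2)\cdots k$. Writing $\sigma\oplus\rho$ for the permutation obtained by placing $\rho$ above and to the right of $\sigma$, the first pattern is $(12\cdots m)\oplus(12\cdots(k-m))$ and the second is $(m(m-1)\cdots 21)\oplus(12\cdots(k-m))$, so the statement is the $\rho=12\cdots(k-m)$ instance of the assertion that $(12\cdots m)\oplus\rho$ and $(m(m-1)\cdots 21)\oplus\rho$ are Wilf-equivalent for every pattern $\rho$; this is the Backelin--West--Xin theorem, conjectured by West \cite{w90}. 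The case $m=2$ is in fact already in hand: it is exactly Lemma \ref{fixing} with $\ell=k$, which produces a bijection $\av_n(12\cdots k)\to\av_n(21\,3\cdots k)$. The plan for general $m$ is to reproduce the Backelin--West--Xin argument in two stages.

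The first stage introduces \emph{shape-Wilf-equivalence}: for a Young diagram $\lambda$ one counts the fillings of $\lambda$ by $0$'s and $1$'s with exactly one $1$ in each row and each column that do not contain a prescribed pattern (read as a submatrix), and two patterns are shape-Wilf-equivalent if these counts agree for every $\lambda$ --- ordinary Wilf-equivalence being the case in which $\lambda$ is the staircase carrying the permutation matrices of a given size. The structural input is a standard composition lemma for shape-Wilf-equivalence: if $\sigma$ and $\sigma'$ are shape-Wilf-equivalent, then so are $\sigma\oplus\rho$ and $\sigma'\oplus\rho$ for every $\rho$. I would prove this lemma first, by fixing inside a filling of $\lambda$ the cells that would carry the ``$\rho$-part'' of an occurrence and running the shape-Wilf-equivalence of $\sigma$ and $\sigma'$ on the (varying) complementary sub-diagrams. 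Granting it, the theorem reduces to a single claim: the patterns $12\cdots m$ and $m(m-1)\cdots 21$ are shape-Wilf-equivalent; applying the lemma with $\sigma=12\cdots m$, $\sigma'=m(m-1)\cdots 21$ and $\rho=12\cdots(k-m)$ then finishes the proof.

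The second stage --- proving that $12\cdots m$ and $m(m-1)\cdots 21$ are shape-Wilf-equivalent --- is the genuinely hard part, and I expect it to be the main obstacle. I would attempt it by induction on $m$ together with induction on the size of the diagram, constructing an explicit bijection acting on the top row (or rightmost column) of a filling: for $m=2$ this is the filling-board version of the Simion--Schmidt idea \cite{ss85} that also underlies Lemma \ref{fixing}, and for $m=3$ the induction can still be carried out essentially by hand. For $m\ge 4$, however, after removing the extremal cell one lands on a family of smaller diagrams where the case $m-1$ must be invoked, and re-assembling the resulting filling while certifying that no $m(m-1)\cdots 21$ pattern has been created demands a delicate global argument; this is precisely the combinatorial core of Backelin--West--Xin, and I see no shortcut past it. A self-contained alternative staying at the level of permutations would be to iterate Lemma \ref{fixing}-type bijections, passing from $\av_n(12\cdots k)$ through $\av_n(21\,3\cdots k)$, then $\av_n(3\,2\,1\,4\cdots k)$, and so on; but producing the bijection that reverses a prefix of length at least $3$ while controlling the relevant co-rank strata is the same difficulty in disguise.
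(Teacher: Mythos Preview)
Your proposal is correct and follows essentially the same route as the paper: both identify the statement as the special case $\rho=12\cdots(k-m)$ of the Backelin--West--Xin result that $i_m\oplus\rho$ and $d_m\oplus\rho$ are Wilf-equivalent for every $\rho$, via shape-Wilf-equivalence and the $\oplus$-composition lemma. The paper simply cites \cite{bw00}, \cite{bwx07}, \cite{k06} and points to \cite{v15} for the notion of shape-Wilf-equivalence rather than reproducing the argument, whereas you outline that argument and honestly flag its hard core (the shape-Wilf-equivalence of $i_m$ and $d_m$) without carrying it out; one small slip is that ordinary Wilf-equivalence corresponds to $\lambda$ being an $n\times n$ square, not a staircase.
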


\begin{proof}
For two patterns $q$ and $q'$ of length $\ell$ and $\ell'$ respectively,  let $q\oplus q'$ denote the pattern of length $\ell +\ell'$ whose first $\ell$ entries form a copy of the pattern $q$, 
whose last $\ell'$ entries form a copy of the pattern $q'$, and in which the set of the first $\ell$ entries is the set $\{1,2,\cdots ,\ell\}$, and so the set of the last $\ell'$ entries is
necessarily $\{\ell+1,\ell+2,\cdots ,\ell+\ell'\}$.  It then follows from results in \cite{bw00}, \cite{bwx07}, and \cite{k06} that
 for any pattern $q$, the equality $|\av_n(i_m \oplus q)| =|\av_n(d_m \oplus q)|$ holds, where $i_m$ is the increasing pattern of length $m$ and $d_m$ is the decreasing pattern of length $m$.
Then Theorem \ref{west-backelin} is the special case when $q=i_{k-m}$. 
The interested reader can check the details and the necessary notion of {\em shape-Wilf-equivalence} on pages 762 -763 of \cite{v15}. 
\end{proof}

In other words, we can reverse the subsequence of the first $m$ entries of the monotone pattern and get a pattern that is Wilf-equivalent to the original monotone pattern.
Unfortunately, Theorem \ref{west-backelin} by itself is not enough for our purposes. We would need an affirmative answer to the following question.

\begin{question} \label{bijective} 
 Does there exist a bijection \[G_{n,m}:\av_n(123\cdots k) \rightarrow  \av_n (m(m-1)\cdots 21 (m+1)(m+2)\cdots k) \] that fixes all right-to-left maxima?
\end{question}

If such a bijection exists, then the proof of Theorem \ref{thebees} can be extended as follows. Let $k>3$ be an odd integer, and let $1<m<k-1$. Let $A_{k,k,m}$ be the
set of $k-1$ patterns of length $k$ that start with an $m(m-1)\cdots 21 (m+1)(m+2)\cdots (k-1)$-pattern, and end in an entry less than $k$. So for instance, 
$A_{5,5,3}=\{32154, 42153, 43152, 43251\}$. Then the generating function $A_{A_{k,k,m}}(z)$ is not algebraic. 

It is possible that the answer to Question \ref{bijective} is positive, but difficult. Indeed, such an answer would yield a result that is stronger than Theorem \ref{west-backelin}, and that
theorem is quite difficult to prove on its own.

\acknowledgements
\label{sec:ack}

The author is indebted to Robert Brignall for pointing out useful references. The author is also indebted to his three referees whose careful reading improved the presentation of the results in this paper.

\bibliographystyle{abbrvnat}
\bibliography{finallongtail}
\label{sec:biblio}

\end{document}